\newcommand{\ubar}[1]{\underaccent{\bar}{#1}}
\newcommand{\lev}{\textrm{lev}}
\newcommand{\olsi}[1]{\,\overline{\!{#1}}} 
\newtheorem{theorem}{Theorem}
\newtheorem{assumption}{Assumption}
\newtheorem{lemma}{Lemma}
\title{\LARGE \bf
Reduced Sample Complexity in Scenario-Based Control System Design via Constraint Scaling
}
\author{Jaeseok Choi$^{1}$, Anand Deo$^{2}$, Constantino Lagoa$^{3}$ and Anirudh Subramanyam$^{1}$
\thanks{This work was supported in part by the U.S. Department of Energy under Grant DE-SC0023361.}
\thanks{$^{1}$J. Choi and A. Subramanyam are with the Department of Industrial and Manufacturing Engineering, The Pennsylvania State University, University Park, PA, USA (e-mail: subramanyam@psu.edu)}
\thanks{$^{2}$A. Deo is with the Decision Sciences Area, Indian Institute of Management Bangalore, Bengaluru, India.}
\thanks{$^{3}$C. Lagoa is with the School of Electrical Engineering and Computer Science, The Pennsylvania State University, University Park, PA, USA.}
}
\begin{document}

\maketitle
\thispagestyle{empty}
\pagestyle{empty}

\begin{abstract}
    The scenario approach is widely used in robust control system design and chance-constrained optimization, maintaining convexity without requiring assumptions about the probability distribution of uncertain parameters. However, the approach can demand large sample sizes, making it intractable for safety-critical applications that require very low levels of constraint violation. To address this challenge, we propose a novel yet simple constraint scaling method, inspired by large deviations theory. Under mild nonparametric conditions on the underlying probability distribution, we show that our method yields an exponential reduction in sample size requirements for bilinear constraints with low violation levels compared to the classical approach, thereby significantly improving computational tractability. Numerical experiments on robust pole assignment problems support our theoretical findings.
\end{abstract}

\begin{keywords}
    Chance-constrained optimization, scenario approach, large deviations theory, constraint scaling.
\end{keywords}

\section{Introduction} \label{sec:introduction}
Chance-constrained optimization has emerged as an effective framework for designing controllers in the presence of uncertainty. Unlike traditional robust control design, which aims to satisfy the system constraints for all admissible realizations of the uncertain parameters, chance constraints allow for a small yet controlled probability of constraint violation, thus circumventing the conservatism of robust approaches. Unfortunately, however, the solution of chance-constrained optimization problems is computationally intractable even in simple cases~\cite{luedtke2010integer}.  Several algorithms have been proposed in the literature for their solution, including those based on sample-free analytical methods~\cite{prekopa2003,chen2010cvar,lagoa2005probabilistically,tong2022optimization,bardakci2023probability}, sample average approximations~\cite{hong2011sequential,nemirovski2007convex,pena2020solving}, importance sampling~\cite{vanAckooij2014,barrera2016chance}, linear matrix inequalities~\cite{jasour2015semidefinite},
and scenario approaches~\cite{calafiore2006scenario,nemirovski2006scenario,blanchet2024efficient}.

Among the various algorithms, the scenario approach remains one of the most widely used.
The key idea is to enforce the system constraints for only a finite number of randomly generated samples of the uncertain parameters and solve the corresponding ``sampled optimization problem'' to obtain the desired design parameters.
The popularity of this approach can be attributed to its simplicity and computational efficiency.
In particular, if the original constraints are convex in the design variables, then the sampled problem is also convex.
Moreover, bounds on the number of required samples can be easily computed as a function of the problem dimension, confidence level of the randomization procedure, and the prescribed probability of constraint violation, $\varepsilon$, across all (potentially non-sampled) realizations of the uncertainty.
Examples of such bounds can be found in~\cite{calafiore2005uncertain, calafiore2006scenario, campi2008exact, campi2009scenario}.

Our work is motivated by applications that necessitate very high levels of reliability ($>99\%$), or equivalently, very low probabilities of constraint violation $\varepsilon$.
Examples can be found in aircraft control~\cite{HORN20051037}, robotics~\cite{janson2018monte}, autonomous driving~\cite{de2021risk}, power systems~\cite{lukashevich2024importance} and telecommunications~\cite{alsenwi2019chance}.
In these applications, the required sample complexity of the scenario approach, which scales roughly as $\varepsilon^{-1}$, can become extremely large when the desired probability of constraint violation is small.
For example, when $\varepsilon = 0.001$, which translates to a modest reliability level of $99.9\%$,
the bound from~\cite{campi2009scenario} requires the introduction of 7,992 samples to guarantee feasibility of the obtained solution with 95\% confidence, even when there is only one design parameter.
This large sample requirement severely reduces the computational efficiency of the scenario approach in applications that either need high levels of reliability, or are limited by computational resources, or need to be deployed in online settings such as model predictive control.

Our work aims to drastically reduce the sample complexity of the scenario approach.
In this letter, we restrict our attention to design problems specified by multiple (joint) constraints, each of which is a bilinear function of the design variables and uncertain parameters.
Such bilinear constraints arise in a wide variety of control design problems, including linear model predictive control with additive disturbances~\cite{cannon2010stochastic,kouvaritakis2010explicit}, robust superstable control~\cite{blanchini2000convex}, and robust pole assignment~\cite{keel1997linear}.
Our main contribution is a ``scaled sampled problem'' in which the constant right-hand side terms of the bilinear constraints are scaled down by a prespecified factor $s \geq 1$.
Under a mild nonparameteric assumption on the probability distribution of the uncertainties (satisfied by a large class of distributions), we show that the sample complexity of the scaled sampled problem is exponentially smaller.
Specifically, we demonstrate
that as the required constraint violation level is made more stringent ($\varepsilon \to 0$), the sample complexity of the scaled problem increases only as $\varepsilon^{-s^{-\alpha}}$, where $\alpha > 0$ is a parameter that depends only on the underlying probability distribution.
In the previous example where the desired violation level is $\varepsilon = 0.001$ and the underlying distribution is multivariate normal ($\alpha = 2$), a modest scaling of $s = 1.2$ requires the introduction of only $969$ samples, consituting an eight-fold reduction in sample size requirement.

The idea of scaling in scenario approaches was originally conceived, to the best of our knowledge, in~\cite{nemirovski2006scenario}, where the authors propose to scale the samples before solving the sampled problem.
Their approach requires finding and tuning an alternate so-called majorizing distribution that possess certain specific properties, which does not appear to readily generalize to a wide class of problems.
Recently, the authors of~\cite{blanchet2024efficient} propose to augment the scenario approach for problems with heavy-tailed distributions.
Their method requires certain analytically computable outer approximations to the uncertain system constraints, which renders it highly application-specific. Finally, under assumptions similar to ours, 
\cite{blanchet2024optimization} recently derived asymptotic relationships between the chance-constrained and sampled problems, without addressing sample complexity or algorithmic issues.
In addition to these probabilistic approaches, other techniques to reduce the sample complexity of scenario approaches include both application-specific\cite{kohler2020computationally,schildbach2014scenario,lukashevich2024priori} and general constraint removal methods \cite{campi2011sampling, romao2022exact}; the latter enable lower costs but at the expense of solving multiple optimization problems.
In contrast to the aforementioned works, our proposed method aims to provide a more general recipe for reducing sample complexity in scenario approaches. 
Our method is practical, easy to implement, and almost identical to the original scenario approach. Moreover, it requires neither extensive parameter tuning nor computation of analytical approximations, and performs well under very general assumptions about the underlying probability distribution, making it applicable to a wide range of uncertainty structures.

The remainder of the letter proceeds as follows. In Section~\ref{sec:formulation}, we present the problem formulation we study along with assumptions and briefly review the background on the scenario approach. In Section~\ref{sec:results}, we present our proposed method and the key theoretical results underpinning the method.
Finally, validation of these results via numerical experiments are provided in Section~\ref{sec:experiment}.

\emph{Notation.} We use $\mathbb{R}_{+}$ to denote the set of nonnegative reals. For a function $f$, denote the upper level set $\{z: f(z) > a,\, \|z\|\leq M\} =\lev_{a,M}(f)$. If $M=\infty$, we instead write $\lev_a(f)$. Further, let $\bar A$ denote the closure of a set $A$. We use Landau's notation for orders: for functions $f$ and $g$, we say that $f(t) = O(g(t))$ if $f(t) \leq c_0g(t)$ for some $c_0$, and say that $f(t) =o(g(t)) $ if $f(t)/g(t) \to 0$ as $t\to 0$.

\section{Problem Description and Background} \label{sec:formulation}
We consider the chance-constrained optimization problem,
\begin{equation}
    \label{eq:ccp}
    \tag{$\text{CCP}_\varepsilon$}
    \begin{aligned}
        \min_{x \in X} & \quad c(x)             \\
        \text{s.t.}    & \quad \mathbb{P}_{\xi}
        \left(
        \max_{i=1,\ldots,m} x^\top A^{(i)}\xi \leq 1
        \right) \geq 1 - \varepsilon,
    \end{aligned}
\end{equation}
where $x\in\mathbb{R}^n$ is the vector of design parameters, $X \subseteq \mathbb{R}^n$
is a compact and convex set, $c:\mathbb{R}^{n}\mapsto \mathbb{R}$ is a convex function, and $\xi\in\mathbb{R}^{d}$ is the vector of uncertain parameters. 
There are $m$ constraints that must be jointly satisfied with probability at least $1 - \varepsilon$, where $\varepsilon \in (0,1)$ denotes the prescribed violation level.
Each constraint is linear in $x$ for fixed $\xi$ and linear in $\xi$ for fixed $x$, and the corresponding matrix of coefficients for the $i$th constraint is denoted by $A^{(i)} \in \mathbb{R}^{n \times d}$.
We shall assume that problem~\eqref{eq:ccp} has at least one feasible solution for all $\varepsilon \in (0,1)$, ensuring that its solutions are well-defined.

Central to our development is the following assumption, where we require that $\xi$ is a continuous random vector with a density function satisfying the nonparametric condition of multivariate regular variation~\cite{basrak2002characterization}.
Before stating the assumption, we note that a function $h:\mathbb R_{+} \mapsto \mathbb R_{+}$ is said to be `slowly varying' if $\lim_{t\to\infty}h(tx)/h(t) = 1$ for all $x>0$.
\begin{assumption}\label{assumption:rv}
    The density function of $\xi$ satisfies $f_\xi(z) = \exp(-Q(z))$, where $Q: \mathbb{R}^d \mapsto \mathbb{R}$ is a multivariate regularly varying function.
    Specifically, there exists a constant $\alpha>0$, a slowly varying function $h$, and a continuous function $\lambda : \mathbb{R}^d_{+} \mapsto \mathbb{R}_{+}$, such that $q(u) \coloneqq h(u) u^\alpha$ is increasing and continuous for sufficiently large $u$, and
    \begin{equation}\label{eq:lambda}
        \lim_{u\to\infty}\frac{Q(uz)}{q(u)} = \lambda(z) \text{ for all } z \in \mathbb{R}^d_{+}.
    \end{equation}
\end{assumption}
The family of probability distributions that satisfy Assumption~\ref{assumption:rv} can be interpreted as those whose density function is roughly equal to the exponential of a polynomial.
This family encompasses a wide range of distributions, including light-tailed distributions, such as multivariate normal ($\alpha=2$) and exponential ($\alpha=1$), heavy-tailed distributions ($\alpha < 1$), and their mixtures. The parameter $\alpha$ can also be estimated from data~\cite{de2016approximation}.
Moreover, unlike existing literature on chance-constrained optimization~\cite{barrera2016chance,nemirovski2006scenario}, the assumption does not require the existence of a finite moment generating function, which may not exist even for simple elliptical distributions.
Table~\ref{table:distributions} illustrates the parameter $\alpha$ and limiting function $\lambda(\cdot)$ for some examples of distributions satisfying Assumption~\ref{assumption:rv}. See \cite{deo2023achieving} for further details.

\begin{table}[!ht]
    \centering
    \caption{Examples of distributions satisfying Assumption~\ref{assumption:rv}}
    \label{table:distributions}
    \begin{tabular}{p{3.5cm}|p{0.5cm}|p{1.5cm}}
        \toprule
        Distribution family                                                    & $\alpha$ & $\lambda(z)$                                                         \\
        \midrule\midrule
        Multivariate normal$^1$                                                & $2$      & $ \displaystyle \frac12 \left(z^\top \Sigma^{-1} z \right)$          \\ \midrule
        Elliptical$^1$ with generator $R$ such that $f_R(r) = \exp(-r^{k})$,   & $k$      & $\displaystyle (z^\top \Sigma^{-1}z)^{k}$                            \\ \midrule
        Gaussian mixture$^2$                                                   & $2$      & $\displaystyle \frac{1}{2}\sum_{k=1}^K (z^\top \Sigma^{-1}_k z)^{2}$ \\ \midrule
        Weibull with shape $k$ and scale parameters $\sigma_1,\ldots,\sigma_m$ & $k$      & $\displaystyle \sum_{i=1}^m \left(\frac{z_i}{\sigma_i}\right)^k$     \\
        \bottomrule
        \multicolumn{3}{l}{\footnotesize $^1$Covariance $\Sigma$,\quad $^2 K$ components with covariances $\Sigma_1, \ldots, \Sigma_K$}
    \end{tabular}
\end{table}

\subsection{Overview of the Scenario Approach}
For any $x \in X$, let $V(x) \coloneqq \mathbb{P}_{\xi} ( \max_{i=1,\ldots,m} x^\top A^{(i)}\xi > 1 )$ denote the probability of constraint violation of $x$.
Therefore, any feasible solution $x$ of~\eqref{eq:ccp} satisfies $V(x) \leq \varepsilon$ and is also termed an ``$\varepsilon$-level solution''.
To solve~\eqref{eq:ccp}, the standard scenario approach~\cite{campi2009scenario} formulates a sampled problem, obtained by replacing the probabilistic constraint with a finite number of deterministic constraints, each corresponding to a sample, $\xi^{(j)}$, $j = 1, \ldots, N$, of the uncertain parameter vector $\xi$, randomly generated according to $\mathbb{P}_{\xi}$.
\begin{equation}
    \label{eq:sp}
    \tag{$\text{SP}_{N}$}
    \begin{aligned}
        \min_{x\in X} & \quad c(x)                                                                     \\
        \text{s.t.}   & \quad \max_{i=1,\ldots,m} x^\top A^{(i)} \xi^{(j)} \leq 1, \quad j=1,\ldots,N.
    \end{aligned}
\end{equation}
While the scenario approach extends to general convex constraints, we focus on problems with bilinear constraints that also commonly occur in various control applications. In particular,
it is known (see \cite[Theorem 1]{campi2009scenario}), that if
\begin{align}\label{eq:numsample}
    N \geq \ubar{N}(\varepsilon,\beta) \coloneqq  \left\lceil\frac{2}{\varepsilon} \left( \log \frac{1}{\beta} + n \right)\right\rceil,
\end{align}
then with probability at least $1-\beta$ (with respect to the $N$-fold product distribution $\mathbb{P}_{\xi}^N$),
the sampled problem~\eqref{eq:sp} is either infeasible or any feasible solution $\hat{x}_N$ of~\eqref{eq:sp} is an $\varepsilon$-level feasible solution of~\eqref{eq:ccp}.

\section{Proposed Method: Scaled Sampled Problem}\label{sec:results}
As we noted in the introduction, the sample size prescribed by equation~\eqref{eq:numsample} can become extremely large when the desired level of violation, $\varepsilon$, is small.
To circumvent this sample complexity, we introduce the following ``scaled sampled problem'' obtained by scaling down the right-hand side coefficients of the constraints in the sampled problem~\eqref{eq:sp} by a factor $s \geq 1$.
\begin{equation}
    \label{eq:s-sp}
    \tag{$\text{SSP}_{N, s}$}
    \begin{aligned}
        \min_{x\in X} & \quad c(x)                                                                               \\
        \text{s.t.}   & \quad \max_{i=1,\ldots,m} x^\top A^{(i)} \xi^{(j)} \leq \frac{1}{s}, \quad j=1,\ldots,N.
    \end{aligned}
\end{equation}
Observe that the scaled sampled problem~\eqref{eq:s-sp} reduces to the original sampled problem~\eqref{eq:sp} when $s = 1$.
Intuitively, the scaled version~\eqref{eq:s-sp} restricts the feasible region of~\eqref{eq:sp} and therefore, feasible solutions of the former must also be feasible in the latter whenever the number $N$ of sampled constraints and the corresponding realizations, $\xi^{(1)}, \ldots, \xi^{(N)}$, are identical.
However, our key insight is that one only needs to solve the scaled sampled problem~\eqref{eq:s-sp} for a much smaller number of samples, $N$, compared to the prescription, $N(\varepsilon, \beta)$, provided by equation~\eqref{eq:numsample}.

Our proposed procedure is summarized in Algorithm~\ref{alg}.
\begin{algorithm}[!ht]
    \caption{Scaled scenario approach}\label{alg}
    \KwIn{Constraint violation level $\varepsilon$, confidence parameter $\beta$, distribution parameter $\alpha$, scaling factor $s \geq 1$}
    \KwOut{Approximate solution, $\hat{x}_{N,s}$, of~\eqref{eq:ccp}}
    \SetAlgoLined
    Sample $N = \ubar{N}(\varepsilon^{s^{-\alpha}},\beta)$ independent realizations, $\xi^{(1)},\ldots,\xi^{(N)} \sim \mathbb{P}_{\xi}$\\
    Solve~\eqref{eq:s-sp} and set $\hat{x}_{N,s}$ as its solution
\end{algorithm}
When compared with the standard scenario approach, our procedure introduces only one additional user-defined parameter, namely the scaling factor $s$.
Larger values of $s$ directly translate to fewer required samples in the scaled sampled problem and to reduced computation times for its solution.
In particular, for small values of $\varepsilon$, our procedure requires roughly $s^{\alpha}$ times fewer samples compared to the standard scenario approach, as can also be immediately verified from the following equation.
\begin{align}\label{expo_reduction}
    \lim_{\varepsilon \to 0^{+}} \frac{\log \ubar{N}(\varepsilon^{s^{-\alpha}}, \beta)}{\log \ubar{N}(\varepsilon, \beta)} = \frac{1}{s^\alpha}
\end{align}
Intuitively, this is because our procedure only requires as many samples as are required by the standard scenario approach to obtain an $\varepsilon_1$-level feasible solution of the original problem~\eqref{eq:ccp}, where $\varepsilon_1 = \varepsilon^{s^{-\alpha}}$ satisfies $\varepsilon_1 \geq \varepsilon$.
We highlight, however, that the lower sample complexity may come at a potential increase in objective value.
The scaling factor $s$ can thus be interpreted as offering a smooth tradeoff between computational efficiency and solution conservatism.

The following theorem provides theoretical justification for the efficiency of our method (see Appendix for proof).
\begin{theorem}[Asymptotic feasibility of \eqref{eq:s-sp}]\label{thm:weak_feasibility}
    Fix $s \geq 1$ and $\beta \in (0,1)$.
    Let $\{x_{\varepsilon}\}_{\varepsilon>0}$ denote any sequence of feasible solutions of the scaled sampled problem~\eqref{eq:s-sp} with $N \geq \ubar{N}(\varepsilon^{s^{-\alpha}},\beta)$.
    Then, with probability at least $1-\beta$,
    \begin{equation}\label{eq:asymptotic_feasibility}
        \liminf_{\varepsilon \rightarrow 0^{+}}\frac{\log V(x_{\varepsilon})}{\log \varepsilon} \geq 1.
    \end{equation}
\end{theorem}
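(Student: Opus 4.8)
\emph{Proof plan.} The strategy is to combine the classical sample bound~\eqref{eq:numsample} with a large-deviations comparison between the violation probabilities of the original constraint (right-hand side $1$) and the scaled constraint (right-hand side $1/s$). Write $g(x,\xi):=\max_{i=1,\ldots,m}x^\top A^{(i)}\xi$, so that $V(x)=\mathbb{P}_\xi(g(x,\xi)>1)$, and let $V_s(x):=\mathbb{P}_\xi(g(x,\xi)>1/s)$ be the violation probability of the scaled constraint. The first step is immediate: the constraints of~\eqref{eq:s-sp} are exactly the bilinear constraints $\max_i x^\top(sA^{(i)})\xi^{(j)}\le 1$, so~\eqref{eq:s-sp} is a sampled problem of the form~\eqref{eq:sp} with $A^{(i)}$ replaced by $sA^{(i)}$ and target violation level $\varepsilon^{s^{-\alpha}}$; since $N\ge\ubar{N}(\varepsilon^{s^{-\alpha}},\beta)$, the bound~\eqref{eq:numsample} guarantees that, with probability at least $1-\beta$, every feasible solution $x_\varepsilon$ of~\eqref{eq:s-sp} satisfies $V_s(x_\varepsilon)\le\varepsilon^{s^{-\alpha}}$. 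As $s\ge1$ we trivially also have $V(x_\varepsilon)\le V_s(x_\varepsilon)\le\varepsilon^{s^{-\alpha}}$, but this only yields $\liminf_{\varepsilon\to0^+}\log V(x_\varepsilon)/\log\varepsilon\ge s^{-\alpha}$; the entire content of the theorem is to boost the exponent from $s^{-\alpha}$ up to $1$. (A measure-theoretic point — having the bound $V_s(x_\varepsilon)\le\varepsilon^{s^{-\alpha}}$ hold simultaneously for all small $\varepsilon$ on one probability-$(1-\beta)$ event, which is what the liminf requires — is settled by a simultaneous-in-$N$ version of the scenario bound, costing only lower-order terms; I do not dwell on it.)

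The crux is the \emph{deterministic} comparison
\[
\frac{\log V(x_\varepsilon)}{\log V_s(x_\varepsilon)}\;\longrightarrow\;s^{\alpha}\qquad\text{along any sequence with }V_s(x_\varepsilon)\to 0.
\]
To establish it, put $a_i=a_i(x_\varepsilon):=(A^{(i)})^\top x_\varepsilon\in\mathbb{R}^d$; then, for any $c>0$, $\{z:g(x_\varepsilon,z)>c\}=\bigcup_{i=1}^{m}\tfrac{c}{\|a_i\|}\,H(\hat a_i)$, where $\hat a_i:=a_i/\|a_i\|$ and $H(v):=\{z:v^\top z\ge1\}$. Since $\xi$ has a strictly positive Lebesgue density (its $Q$ is finite everywhere), $\mathbb{P}_\xi(a_i^\top\xi>1/s)$ stays bounded away from $0$ whenever $\|a_i\|$ does; hence $V_s(x_\varepsilon)\to0$ forces $\|a_i\|\to0$, i.e.\ $\tau_i:=1/\|a_i\|\to\infty$, for each $i$. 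I then invoke the large-deviations asymptotics implied by Assumption~\ref{assumption:rv} (in the spirit of~\cite{deo2023achieving,blanchet2024optimization}): for a set $A$ bounded away from the origin, $-\log\mathbb{P}_\xi(\xi\in tA)=q(t)\big(\inf_{z\in\bar A}\lambda(z)+o(1)\big)$ as $t\to\infty$. Applying this to each half-space $H(\hat a_i)$ at scale $\tau_i$ (for level $1$) and at scale $\tau_i/s$ (for level $1/s$), combining the finitely many half-spaces by the principle of the largest term, and using $q(\tau/s)=s^{-\alpha}q(\tau)(1+o(1))$ (slow variation of $h$), I obtain, writing $\Lambda_\varepsilon:=\min_i q(\tau_i)\,\lambda^{\ast}(\hat a_i)$ with $\lambda^{\ast}(v):=\inf_{v^\top z\ge1}\lambda(z)$ (positive and continuous in $v$, by continuity of $\lambda$),
\[
-\log V(x_\varepsilon)=\Lambda_\varepsilon\,(1+o(1)),\qquad -\log V_s(x_\varepsilon)=s^{-\alpha}\,\Lambda_\varepsilon\,(1+o(1)),
\]
and dividing gives the displayed limit (note $\Lambda_\varepsilon\to\infty$).

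Combining the two steps closes the argument: on the probability-$(1-\beta)$ event, either $V(x_\varepsilon)=0$ for all small $\varepsilon$ (so $\log V(x_\varepsilon)/\log\varepsilon=+\infty$ and~\eqref{eq:asymptotic_feasibility} holds trivially; this also covers the degenerate case $a_i(x_\varepsilon)\equiv0$), or, for all small $\varepsilon$, $\log V(x_\varepsilon)=(s^{\alpha}+o(1))\log V_s(x_\varepsilon)\le(s^{\alpha}+o(1))\,s^{-\alpha}\log\varepsilon=(1+o(1))\log\varepsilon$, where the inequality uses $\log V_s(x_\varepsilon)\le s^{-\alpha}\log\varepsilon<0$; dividing by $\log\varepsilon<0$ and taking $\liminf_{\varepsilon\to0^+}$ yields~\eqref{eq:asymptotic_feasibility}.

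The step I expect to be the main obstacle is the large-deviations estimate, and specifically securing enough uniformity over the half-space direction: since the binding direction $\hat a_{i^{\ast}}(x_\varepsilon)$ generally moves with $\varepsilon$, a fixed-direction estimate does not by itself suffice. I would avoid requiring full uniformity over the unit sphere by arguing by contradiction — extract a subsequence violating~\eqref{eq:asymptotic_feasibility}, pass to a further subsequence along which every $\hat a_i(x_\varepsilon)$ converges (compactness of the sphere), and then only use the asymptotics along directions converging to a fixed limit, together with continuity of $\lambda$ and of $\lambda^{\ast}$. Making this rigorous — controlling the Laplace-type integral $\int_{H(v)}e^{-Q(tw)}\,t^{d}\,dw$ via Potter-type bounds on the slowly varying $h$ and the convergence $Q(uz)/q(u)\to\lambda(z)$ of Assumption~\ref{assumption:rv}, verifying that the monotonicity of $q$ assumed there is enough, and confirming $\lambda^{\ast}>0$ — is the technical heart of the proof.
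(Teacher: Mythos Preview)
Your proposal is correct and follows essentially the same strategy as the paper: invoke the scenario bound to obtain $V_s(x_\varepsilon)\le\varepsilon^{s^{-\alpha}}$ with probability $1-\beta$, then use the regular-variation large-deviations asymptotics of Assumption~\ref{assumption:rv} together with the slow-variation identity $q(\tau/s)=s^{-\alpha}q(\tau)(1+o(1))$ to transfer this into $\log V(x_\varepsilon)\le(1+o(1))\log\varepsilon$. The only organizational difference is how the uniformity-in-direction issue you flag is resolved: the paper introduces a single global rescaling $y_\varepsilon:=x_\varepsilon\,q^{-1}(\log(1/\varepsilon))$ and packages the needed uniformity into a preliminary lemma (a uniform LDP of the form~\eqref{eq:unif_conv_prob} over compact parameter sets, established via continuous convergence of level sets), so that both $\log V(x_\varepsilon)/\log\varepsilon$ and the bound coming from $V_s$ reduce to the common quantity $\min_i I(y_\varepsilon^\top A^{(i)})$; your per-half-space scales $\tau_i=1/\|a_i\|$ and compactness/subsequence argument on $\hat a_i$ achieve the same end (indeed, by $\alpha$-homogeneity of $\lambda$, your $\Lambda_\varepsilon/(-\log\varepsilon)$ and the paper's $\min_i I(y_\varepsilon^\top A^{(i)})$ coincide up to $1+o(1)$). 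The paper's route is a bit cleaner because the uniform lemma is reusable and avoids the case analysis on $\|a_i\|\to0$, while your decomposition is more explicit about where each factor of $s^{-\alpha}$ originates.
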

We note that although the scaled sampled problem \eqref{eq:s-sp} may become infeasible for some choice of $N$ samples and scaling factor $s$, Theorem~\ref{thm:weak_feasibility} implicitly considers only those cases for which the infinite sequence $\{x_{\varepsilon}\}_{\varepsilon > 0}$ is well-defined. In such cases,
equation~\eqref{eq:asymptotic_feasibility} implies that, for any arbitrarily chosen $\delta > 0$, we have
\[
    \frac{\log V(x_{\varepsilon}) }{ \log \varepsilon } \geq 1-\delta,
\]
for all sufficiently small $\varepsilon$.
In other words, when $\varepsilon$ is small, the violation probability of solutions obtained from our procedure is almost equal to the desired level, $V(\hat{x}_{N,s}) = \varepsilon^{1+o(1)}$.
Theorem~\ref{thm:weak_feasibility} combined with equation~\eqref{expo_reduction} thus shows that
our scaled approach can obtain approximate $\varepsilon$-level feasible solutions of the original \eqref{eq:ccp} similar to the standard scenario approach, but with exponentially fewer samples, $\ubar{N}(\varepsilon^{s^{-\alpha}},\beta)$ instead of $\ubar{N}(\varepsilon,\beta)$.
Our experiments indicate that, in practice, the violation probability remains far below the desired target even for modest levels of $\varepsilon$.

\section{Numerical Validation} \label{sec:experiment}

We demonstrate the performance of our proposed constraint scaling scenario approach on the problem of robust pole assignment problem. The presented example is a small modification of the one in~\cite{lagoa2005probabilistically}. Consider a continuous uncertain open loop plant described by the transfer function\footnote{Although $s$ is conventionally used to denote the Laplace transform variable for continuous-time transfer functions, we use the symbol $z$ here to avoid confusion with our constraint scaling parameter $s$.},
\[
    G_{\xi}(z)=\frac{n_{\xi}(z)}{d_{\xi}(z)}=\frac{(0.75+\xi_3)z+1.25+\xi_4}{z^2+(0.75+\xi_1)z+\xi_2},
\]
where $G_{0}(z) = (0.75z+1.25)/(z^2+0.75z)$ represents the nominal transfer function and the vector $\xi=(\xi_1,\ldots,\xi_4)$ represents the uncertainty in the coefficients of the numerator and denominator.
The presence of uncertainty
makes it challenging to achieve exact closed-loop specifications.
We instead attempt to design a controller of the form\footnote{This is a lead compensator when we add the constraint, $x_2 < x_1$, and a lag compensator if we instead add $x_1 < x_2$.},
\[
    G_c(z)=\frac{n_c(z)}{d_c(z)}=\frac{x_1z+x_2}{z+1},
\]that encourages minimizing the control effort, $x_1^2 + x_2^2$, while ensuring that
each coefficient of the closed loop characteristic polynomial,
\(
n_{\xi}(z)n_c(z)+d_{\xi}(z)d_c(z),
\)
belongs to a given interval with high probability. Specifically, we consider the case where the closed loop polynomial must belong to the family,
\[
    z^3+[1,3]z^2+[1,3]z+[1,3],
\]
with probability $1-\varepsilon$.
It can be verified that this is equivalent to finding a feasible solution of the chance constraint,
{\small
\[
    \mathbb{P}_{\xi}(1\leq x^\top \big(A^{(i)}\xi + b^{(i)}\big) + \xi^\top c^{(i)} + d^{(i)}\leq 3, \; i=1,2,3)\geq 1-\varepsilon.
\]}
This can be reduced to the form shown in~\eqref{eq:ccp} by appropriately defining new decision variables fixed at $1$ and by exploiting the property that any affine transformation of $\xi$ also satisfies Assumption~\ref{assumption:rv}. 

In our experiment, we consider the case where $\xi$ follows a multivariate normal distribution with mean $\mu = (0, 0, 0, 0)$ and covariance matrix $\Sigma = \mathop{\text{diag}}(0.0278, 0.0069, 0.0069, 0.0069)$.
We compare the performance of our scaled scenario approach parameterized by scaling factors $s=1.1$ and $s=1.2$,
with the classical scenario approach~\cite{campi2009scenario}
across different constraint violation levels $\varepsilon \in \{10^{-3}, 10^{-4}, 10^{-5}\}$.
For all methods, we set the confidence level, $\beta=0.05$.
The experiments were conducted in Julia~1.8.1 using JuMP~1.23.3 and Gurobi~11.0.3
    as the solver with a time limit of 1~hour.
    All experiments were run on a 2.8~GHz Intel Xeon Processor with 12~GB RAM.

Figure~\ref{fig:RPA_CPU} compares the computational time for the three methods. For each $\varepsilon$ level, we conduct 100~independent trials to account for variability introduced by random sampling. The results are presented as box plots to visualize the distribution of computational times across these trials.
We find that as $\varepsilon$ decreases, indicating higher reliability requirements, our proposed scaled method achieves significantly improved computational efficiency compared to the classical approach.
Notably, for $\varepsilon=10^{-5}$, the solver fails to find a feasible solution of the latter model within the time limit due to numerical difficulties, whereas the former with $s=1.1$ and $s=1.2$ is solved in approximately 3,500 and 65 seconds, respectively.
The computational advantages become even more pronounced for smaller $\varepsilon$, highlighting the practical scalability of our approach for problems requiring very high levels of robustness.
\begin{figure}[!ht]
    \centering
    \includegraphics[width=0.4\textwidth]{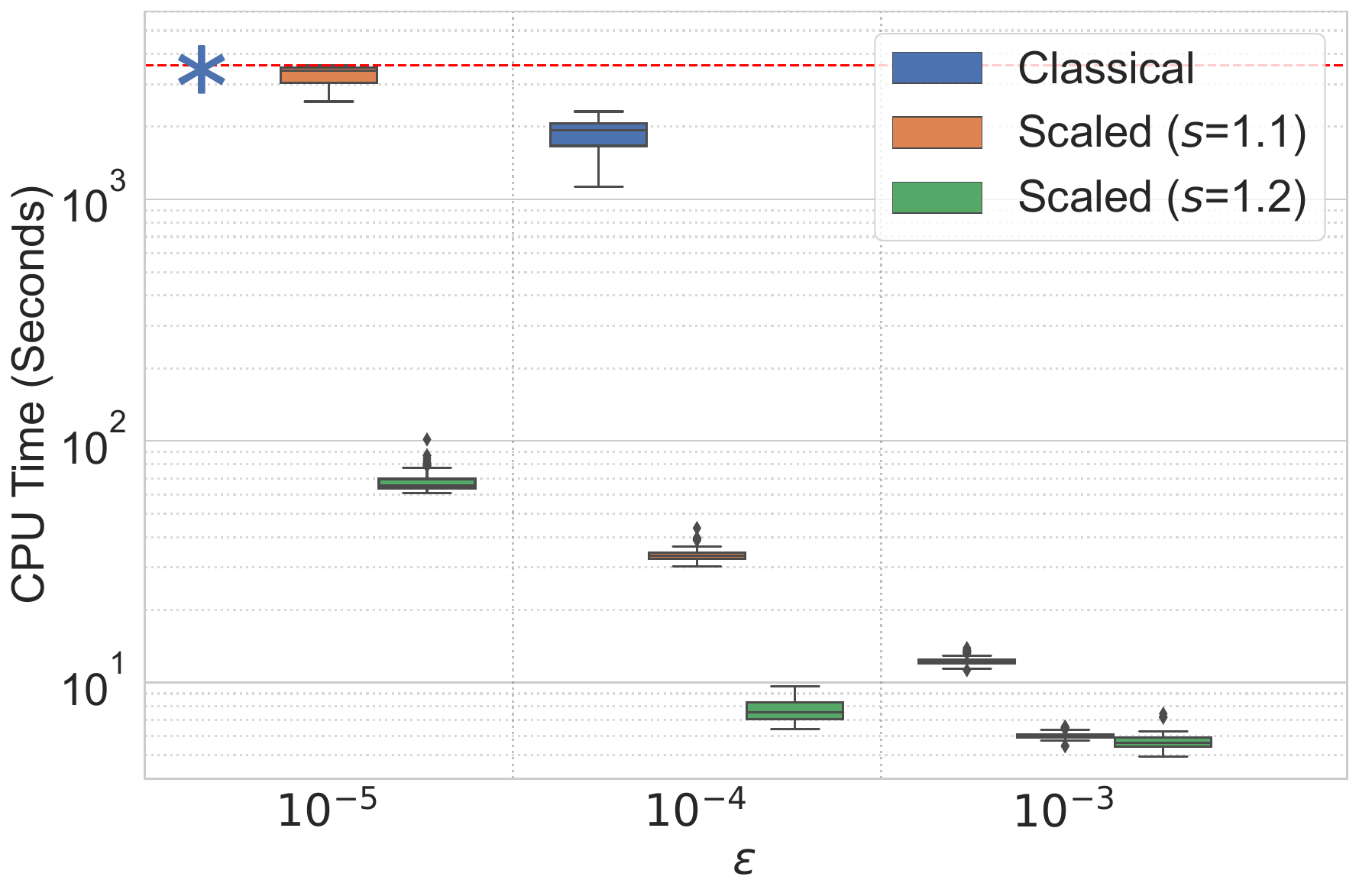}
    \caption{Comparison of computational times. The `*' indicates that the solver could not find any feasible solution in 1~hour using the classical method at $\varepsilon = 10^{-5}$.}
    \label{fig:RPA_CPU}
\end{figure}

Figures~\ref{fig:RPA_Obj} and~\ref{fig:RPA_VP} illustrates the objective values and violation probabilities, respectively, of the solutions obtained by the three methods. The violation probabilities are evaluated using
    $10^9$ out-of-sample Monte Carlo realizations.
As before, we conducted 100~independent trials in each case to account for variability due to sampling.
The results reveal that our proposed scaled scenario approach produces solutions with lower violation probabilities than the desired target $\varepsilon$ and with higher objective values. The conservatism tends to increase with the scaling factor $s$. However, this additional conservatism can be advantageous in the context of robust control design for safety-critical applications. Moreover, the scaling factor $s$ serves as a tunable hyperparameter, offering a balance between computational efficiency and solution robustness, allowing practitioners to adapt the method to their specific needs.

\begin{figure}[!ht]
    \centering
    \includegraphics[width=0.4\textwidth]{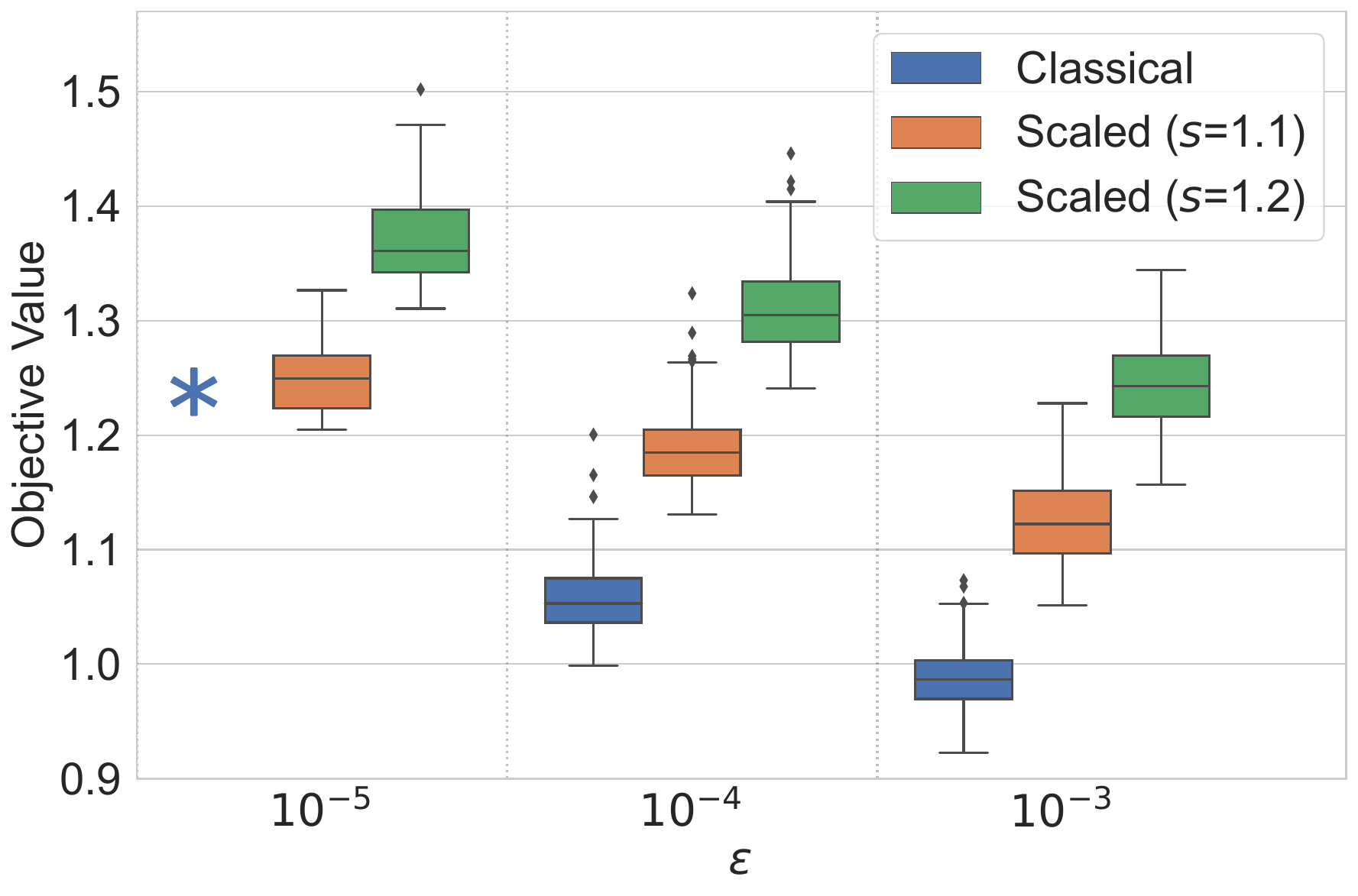}
    \caption{Comparison of objective values.}
    \label{fig:RPA_Obj}
\end{figure}

\begin{figure}[!ht]
    \centering
    \includegraphics[width=0.4\textwidth]{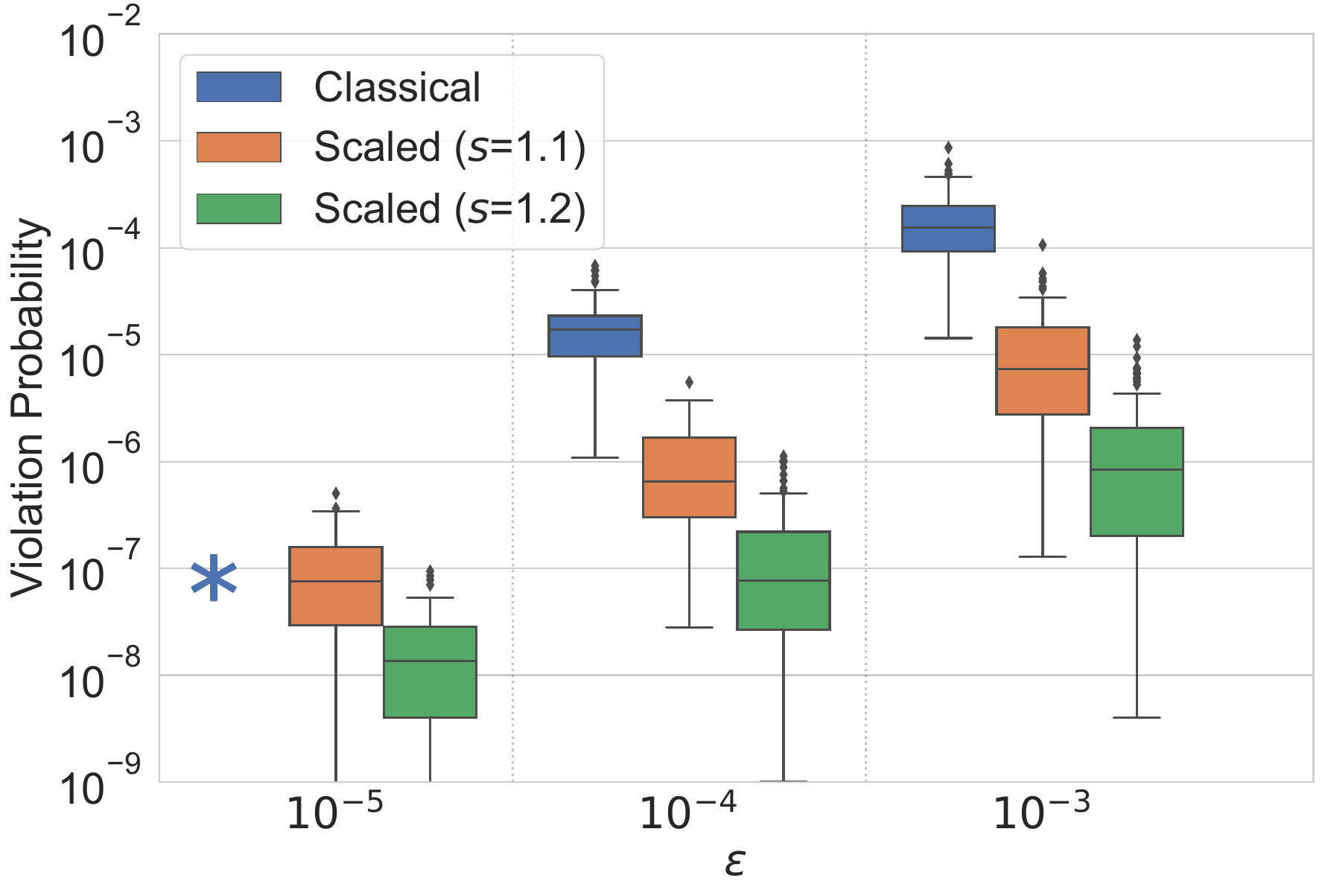}
    \caption{Comparison of violation probabilities.}
    \label{fig:RPA_VP}
\end{figure}

\bibliographystyle{IEEEtran}
\bibliography{references.bib}

\appendix

The proof of Theorem~\ref{thm:weak_feasibility} requires the following technical lemma that establishes a uniform large deviations principle for the family of distributions satisfying Assumption~\ref{assumption:rv}.
Recall that 
    $q(u) = h(u)u^{\alpha}$, where $h$ is a slowly varying function, and $\lambda$ is the limiting function satisfying equation~\eqref{eq:lambda}.
\begin{lemma}\label{tailprob}
    Suppose $\xi$ satisfies Assumption \ref{assumption:rv}, $E \subseteq \mathbb{R}^n$ is any compact subset of $\mathbb{R}^n$, and $y \in E$. Then, the following limit holds uniformly over $E$.
    \begin{equation}\label{eq:unif_conv_prob}
        \lim_{u\to\infty} \dfrac{\log\mathbb{P} \left(\max\limits_{i=1,\ldots,m}y^\top A^{(i)} \xi > u \right)}{-q(u)\min\limits_{i=1,\ldots,m} I \left(y^\top A^{(i)} \right)}
        = 1,
    \end{equation}
    where, for $i = 1, \ldots, m$, we define
    \begin{equation}
        I(y^\top A^{(i)}) \coloneqq \inf_{z \in \mathbb{R}^d_{+}} \left\{\lambda(z): y^T A^{(i)}z\geq 1 \right\}.\label{eq:unif_conv}
    \end{equation}
\end{lemma}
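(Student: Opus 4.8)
The plan is to prove a \emph{uniform} large deviations estimate for half-spaces and then pass to the maximum over the $m$ constraints by the principle of the largest term. Write $v^{(i)}(y) \coloneqq (A^{(i)})^\top y$, so that $\{y^\top A^{(i)}\xi > u\} = \{(v^{(i)})^\top \xi > u\}$, and observe that as $y$ ranges over the compact set $E$, each $v^{(i)}$ ranges over the compact set $V^{(i)} \coloneqq (A^{(i)})^\top E$. First I would reduce the claim to a single half-space: the elementary bounds
\begin{equation*}
    \max_{i} \mathbb{P}\big( (v^{(i)})^\top \xi > u \big) \;\le\; \mathbb{P}\big( \max_{i} (v^{(i)})^\top \xi > u \big) \;\le\; m \max_{i} \mathbb{P}\big( (v^{(i)})^\top \xi > u \big)
\end{equation*}
show, after taking logarithms and dividing by $q(u) \to \infty$, that it suffices to establish
\[
    \log \mathbb{P}\big( v^\top \xi > u \big) = -q(u)\, I(v)\,(1+o(1)) \quad \text{uniformly over } v \in V^{(i)}
\]
for each $i$; the $\log m$ term is then negligible, and because there are only finitely many constraints and $v \mapsto I(v)$ is continuous on each compact $V^{(i)}$ (Berge's maximum theorem, using continuity and coercivity of $\lambda$) with values in a fixed compact subinterval of $(0,\infty)$, the maximum over $i$ of the logarithms collapses to $-q(u)\min_i I(v^{(i)})(1+o(1))$, which is the assertion~\eqref{eq:unif_conv_prob}.

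For the single half-space estimate I would work directly with $\mathbb{P}( v^\top \xi > u) = \int_{v^\top\xi > u} e^{-Q(\xi)}\,d\xi$ and rescale $\xi = uz$, which turns the integration region into the fixed slab $\{v^\top z > 1\}$ and the integrand into $u^{d} e^{-Q(uz)}$. For the lower bound, restrict the integral to a small ball around $u z^\star(v)$, where $z^\star(v)$ is a near-minimizer of $\lambda$ over $\{v^\top z > 1\}$ chosen strictly inside the open half-space; on this ball $Q(uz) \le q(u)(\lambda(z^\star(v)) + \delta)$ for all large $u$ by the (locally uniform) convergence $Q(uz)/q(u) \to \lambda(z)$ and continuity of $\lambda$, so the probability is at least $c_\delta\, u^{d} e^{-q(u)(I(v)+2\delta)}$, and since $d \log u = o(q(u))$ we get $\log \mathbb{P} \ge -q(u) I(v)(1+o(1))$. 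For the upper bound, split the slab into the bounded piece $\{1 < v^\top z \le K\} \cap \{\|z\| \le M\}$ and its complement. On the bounded piece the locally uniform bound $Q(uz) \ge q(u)(\lambda(z) - \delta)$ gives a contribution at most $u^{d}\,\mathrm{vol}\cdot e^{-q(u)(I(v)-\delta)}$, using that coercivity of $\lambda$ forces the infimum of $\lambda$ over $\{v^\top z \ge 1\}$ to be attained on a bounded set, so that truncating at $\|z\| \le M$, $v^\top z \le K$ does not change it for $M,K$ large (uniformly over $V^{(i)}$). The complementary far region is dominated by $\mathbb{P}(\|\xi\| > cu)$ with $c$ large, which one bounds in polar coordinates via $Q(\xi) \ge q(\|\xi\|)\,(\min_{\|w\|=1}\lambda(w) - o(1))$ together with $q(cu)/q(u) \to c^{\alpha}$ to make its logarithm $\le -q(u)(I(v)+1)$, hence negligible. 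Letting $\delta \downarrow 0$ yields the single half-space limit; uniformity over $V^{(i)}$ is automatic because all $z$-regions above lie in one fixed compact set and $I(\cdot)$, $z^\star(\cdot)$ vary continuously there.

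The step deserving the most care --- and the one I expect to be the main obstacle --- is promoting the pointwise limit~\eqref{eq:lambda} of Assumption~\ref{assumption:rv} to a \emph{locally uniform} one, namely $Q(uz)/q(u) \to \lambda(z)$ uniformly for $z$ in compact subsets of $\mathbb{R}^d_+ \setminus \{0\}$; this is the multivariate analogue of the uniform convergence theorem for regularly varying functions and is precisely what legitimizes the integral splitting above uniformly in $v$. I would obtain it by first deducing from~\eqref{eq:lambda} and the regular variation of $q$ that $\lambda$ is positively homogeneous of degree $\alpha$ and (under Assumption~\ref{assumption:rv}, as in every example of Table~\ref{table:distributions}) continuous and coercive with $\min_{\|w\|=1}\lambda(w) > 0$, and then running a finite-covering/Dini-type argument over the unit sphere that exploits the monotonicity of $q$ to upgrade pointwise to uniform convergence; alternatively one may invoke the corresponding statement from the multivariate regular variation literature~\cite{basrak2002characterization,deo2023achieving}. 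A minor point is the degenerate case $(A^{(i)})^\top y = 0$, where $I = +\infty$ and the $i$th half-space is vacuous; such indices are simply dropped from the maximum.
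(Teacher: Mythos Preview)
Your approach is correct but follows a genuinely different route from the paper. The paper does not reduce to single half-spaces; it keeps the event $\{\max_i y^\top A^{(i)}\xi>u\}=\{\xi/u\in\lev_1(w)\}$ for $w(z)=\max_i y^\top A^{(i)}z$ intact, invokes a ready-made large deviations principle for $\{\xi/u\}$ from~\cite{deo2023achieving} (together with a level-set stability corollary from the same reference) to obtain matching upper and lower bounds in terms of $\inf_{\lev_1(w)}\lambda$, and---this is the key structural difference---obtains uniformity in $y$ by establishing \emph{continuous convergence} (the limit holds along every sequence $y_u\to y$) and then appealing to the equivalence of continuous and locally uniform convergence~\cite[Theorem~7.14]{rockafellar2009variational}. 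Your union-bound reduction plus direct Laplace-type integral splitting is more elementary and self-contained, at the cost of having to thread the $v$-dependence through every estimate by hand; the paper's argument is shorter because it outsources both the LDP and the set-convergence lemma to~\cite{deo2023achieving}. One small point in your write-up: you assert that the near-minimizer $z^\star(\cdot)$ varies continuously, but Berge only yields upper hemicontinuity of the argmin correspondence in general; this is harmless, since your lower bound really needs only that all near-minimizers lie in a single compact set (immediate from coercivity of $\lambda$ and the uniform upper bound on $I$ over $V^{(i)}$), not a continuous selection.
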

\begin{proof}
    Under Assumption~\ref{assumption:rv}, the proof of \cite[Theorem~1]{deo2023achieving} shows that the random vectors $\{\xi/u : u>0\}$ satisfy, for any closed $F$ and open $G$ subsets of $\mathbb{R}^d$, the bounds
    \begin{align}
        \limsup_{u\to\infty}\frac{\log \mathbb P(\xi/u \in F)}{q(u)} & \leq  -\inf_{z\in F} \lambda(z) \text{ and } \label{eq:ldp-ub} \\
        \liminf_{u\to\infty}\frac{\log \mathbb P(\xi/u \in G)}{q(u)} & \geq  -\inf_{z\in G}\lambda(z) \label{eq:ldp-lb}
    \end{align}
    Now, let $\{y_u\}$ denote any sequence converging to $y$, which we shall denote as $y_u \to y$. Then, observe that $\max_{i=1,\ldots,m} y_u^\top A^{(i)} z_u \to \max_{i=1,\ldots,m} y^TA^{(i)} z $, whenever $z_u\to z$. Thus, the sequence of functions $w_u(z) = \max_{i=1,\ldots,m} y_u^\top A^{(i)} z$ converges continuously to $w(z) = \max_{i=1,\ldots,m} y^\top A^{(i)} z$. Hence, \cite[Corollary 1]{deo2023achieving} implies that for any $(M,\delta)>0$, there exists $u_0$ such that for all $u>u_0$,
    \begin{equation}\label{eq:containments}
        \lev_{1,M}(w_u) \subseteq \olsi{\lev_1(w)} + B_{\delta}, \lev_{1+\delta,M}(w) \subseteq \lev_{1,M}(w_u),
    \end{equation}
    where $B_\theta \coloneqq \left\{z \in \mathbb{R}^d: \|z\| \leq \theta \right\}$ for any $\theta > 0$.
    Observe that the probability in \eqref{eq:unif_conv_prob} can also be equivalently written as
    \[
        \mathbb{P} \left(\max\limits_{i=1,\ldots,m}y^\top A^{(i)} \xi > u \right)
        =
        \mathbb P(\xi/u \in \lev_1(w))
    \]
    The rest of the proof relies on establishing the following upper and lower bounds for the latter probability.

    \noindent \emph{Upper Bound.} First, note that
    \[
        \mathbb P(\xi/u \in \lev_1(w_u)) \leq  P(\xi/u \in\lev_{1,M}(w_u)) + P(\xi/u \not \in B_M).
    \]
    Then, an application of inequality~\eqref{eq:ldp-ub}, \cite[Lemma 1.2.15]{dembo2009large}, and the first containment in \eqref{eq:containments} give:
    \begin{align*}
        \limsup_{u\to\infty} \frac{\log \mathbb P(\xi/u\in  \lev_1(w_u))}{q(u)} & \\ {
        \leq-\min\left\{\inf_{z\in \lev(w)+B_\delta} \lambda(z), \inf_{\|z\|\geq M}\lambda(z)\right\}.}
    \end{align*}
    Since $\lambda$ is a positively homogeneous function, $\lambda(z) \to \infty$ as $\|z\|\to\infty$ and hence, the second term inside $\min\{\cdot, \cdot\}$ goes to $\infty$. Then,
    since $\delta$ and $M$ above were arbitrary, we have
    \[
        \limsup_{u\to\infty} \frac{\log \mathbb P(\xi/u\in  \lev_1(w_u))}{q(u)}  \leq - \inf_{z\in \lev_1(w)} \lambda(z)
    \]

    \noindent \emph{Lower Bound.} The lower bound~\eqref{eq:ldp-lb} and the second containment in~\eqref{eq:containments} give:
    \[
        \liminf_{u\to\infty} \frac{\log \mathbb P(\xi/u\in  \lev_1(w_u))}{q(u)} \geq  - \inf_{z\in \lev_{1+\delta,M}(w)}\lambda(z).
    \]
    As before, since $\delta$ and $M$ above are arbitrary, we have:
    \[
        \liminf_{u\to\infty} \frac{\log \mathbb P(\xi/u\in  \lev_1(w_u))}{q(u)} \geq  - \inf_{z\in \lev_{1}(w)}\lambda(z).
    \]
    Finally, let $y_u\to y$. Then, observe that
    \begin{equation}\label{eq:continous_convergence}
        \lim_{u\to\infty} \frac{\log \mathbb P\left(\max\limits_{i=1,\ldots,m}  y_u^TA^{(i)} \xi > u \right)}{q(u)} = -\min_{i=1,\ldots,m} I(y^TA^{(i)}).
    \end{equation}
    To conclude the proof denote the left hand side as a sequence of functions $\phi_u$ and the right hand side as a limiting function $\phi$. Then \eqref{eq:continous_convergence} implies that $\phi_u\to \phi$ continuously and therefore uniformly over compact subsets~\cite[Theorem 7.14]{rockafellar2009variational}. 
\end{proof}

\noindent\emph{Proof of Theorem~\ref{thm:weak_feasibility}.}
Define $y_{\varepsilon} = x_{\varepsilon} q^{-1} \left( \log \frac{1}{\varepsilon} \right)$, where for sufficiently small $\varepsilon$, we note that $\log \frac{1}{\varepsilon}$ is sufficiently large and the inverse $q^{-1}$ is well-defined.
Now, note the following:
\begin{align}
      & \liminf_{\varepsilon \rightarrow 0^{+}}\frac{\log V(x_{\varepsilon})}{\log \varepsilon} \nonumber                                                                            \\
    = & \liminf_{\varepsilon \to 0^{+}}\frac{\log \mathbb{P} \left( \max_{i=1,\ldots,m} x_{\varepsilon}^\top A^{(i)} \xi > 1 \right)}{\log \varepsilon } \nonumber                                                                      \\
    = & \liminf_{\varepsilon \to 0^{+}} \frac{\log \mathbb{P} \left( \max_{i=1,\ldots,m} y_{\varepsilon}^\top A^{(i)} \xi > q^{-1} \left( \log \frac{1}{\varepsilon} \right) \right)}{\log \varepsilon } \nonumber                                                           \\
    = & \liminf_{\varepsilon \to 0^{+}}\frac{\log \mathbb{P} \left( \max_{i=1,\ldots,m} y_{\varepsilon}^\top A^{(i)} \xi > q^{-1} \left( \log \frac{1}{\varepsilon} \right) \right)}{-q \left( q^{-1} \left( \log \frac{1}{\varepsilon} \right) \right)\min_{i=1,\ldots,m} I(y_{\varepsilon}^\top A^{(i)})} \nonumber \\
      & \qquad\;\;\; \cdot \frac{-q \left( q^{-1} \left( \log \frac{1}{\varepsilon} \right) \right)\min_{i=1,\ldots,m} I(y_{\varepsilon}^\top A^{(i)})}{\log \varepsilon } \nonumber                                                                                                         \\
    = & \liminf_{\varepsilon \to 0^{+}}\min_{i=1,\ldots,m} I(y_{\varepsilon}^T A^{(i)}) \label{eq:rate_temp}                 
\end{align}
where we have used Lemma~\ref{tailprob} for the first fraction, and the fact that $-q \left( q^{-1} \left( \log \frac{1}{\varepsilon} \right) \right) = \log \varepsilon$ for the second fraction.

Now, since $x_{\varepsilon}$ is a feasible solution of \eqref{eq:s-sp} with $N \geq \ubar{N}(\varepsilon^{s^{-\alpha}},\beta)$, \cite[Theorem 1]{campi2009scenario}
implies that $x_{\varepsilon}$ is an $\varepsilon^{s^{-\alpha}}$-level robustly feasible solution of the scaled problem,
\begin{equation}
    \label{prob1}
    \mathbb{P} \left( \max_{i=1,\ldots,m} y_{\varepsilon}^\top A^{(i)} \xi > \frac{q^{-1} \left( \log \frac{1}{\varepsilon} \right)}{s} \right) \leq \varepsilon^{s^{-\alpha}},
\end{equation}
with probability at least $1 - \beta$. 
Taking logarithms on both sides of~\eqref{prob1} and rearranging:
\begin{equation}
    \label{prob2}
    \frac{\log \mathbb{P} \left( \max_{i=1,\ldots,m} y_{\varepsilon}^\top A^{(i)} \xi > \frac{q^{-1} \left( \log \frac{1}{\varepsilon} \right)}{s} \right)}{\log \varepsilon^{s^{-\alpha}}} \geq 1
\end{equation}
This is equivalent to:
\begin{align*}
    \label{limprob}
    \lim_{\varepsilon \to 0^{+}} & \frac{\log \mathbb{P} \left( \max_{i=1,\ldots,m} y_{\varepsilon}^\top A^{(i)} \xi > \frac{q^{-1} \left( \log \frac{1}{\varepsilon} \right)}{s} \right)}
    {-q \left( \frac{q^{-1} \left( \log \frac{1}{\varepsilon} \right)}{s} \right) \min_{i=1,\ldots,m} I(y_{\varepsilon}^\top A^{(i)})} \nonumber                                                                                                        \\
                                              & \; \cdot \frac{-q \left( \frac{q^{-1} \left( \log \frac{1}{\varepsilon} \right)}{s} \right) \min_{i=1,\ldots,m} I(y_{\varepsilon}^\top A^{(i)})}{\log \left( \varepsilon^{s^{-\alpha}} \right)} \geq 1.
\end{align*}

To proceed, note that the collection $\{x_\varepsilon\}_{\varepsilon>0}$ that are feasible to \eqref{eq:s-sp} lie in the compact set $X$. Therefore, using Lemma \ref{tailprob}, the first fraction above converges to 1 as $\varepsilon \to 0$. For the second fraction, we use the definition of $q(u) = h(u) u^\alpha$ and that $h(\cdot)$ is slowly varying:
\begin{align*}
    \lim_{\varepsilon \to 0^{+}} \frac{-q \left( \frac{q^{-1} \left( \log \frac{1}{\varepsilon} \right)}{s} \right)}{\log \varepsilon^{s^{-\alpha}}}
    = \lim_{\varepsilon \to 0^{+}} \frac{-q \left( q^{-1} \left( \log \frac{1}{\varepsilon} \right) \right) \cdot \frac{1}{s^\alpha}}{\log \varepsilon^{s^{-\alpha}}} = 1
\end{align*}
Together, this implies:
\[
    \liminf_{\varepsilon \to 0^{+}} \min_{i=1,\ldots,m} I(y_{\varepsilon}^\top A^{(i)}) \geq 1.
\]
Along with equation~\eqref{eq:rate_temp}, this proves the claimed result.
\end{document}